% % % % % % % % % % % % % % % % % % % % %old package
\documentclass{amsart}
\usepackage{amsmath, amsthm, amsfonts, amssymb}
\usepackage{mathrsfs}
\usepackage{microtype}
\usepackage[utf8]{inputenc}
\providecommand{\noopsort[1]{}}
\usepackage{bbm}
\usepackage{dsfont}
\usepackage{ifthen}
\usepackage{nicefrac}
\numberwithin{equation}{section}
\usepackage{a4}
\usepackage[active]{srcltx}
\usepackage{verbatim}
\usepackage{hyperref}
\usepackage{color}

\usepackage[shortlabels]{enumitem}
\setlist{leftmargin=*}
\setlist[1]{labelindent=1.2\parindent}

\allowdisplaybreaks

%\usepackage{etoolbox}

%\makeatletter
%\patchcmd{\@startsection}
%  {\@afterindenttrue}
 % {\@afterindentfalse}
  %{}{}
%\makeatother

%%%%%% makros.tex
\newtheorem{thm}{Theorem}[section]
\newtheorem{coro}[thm]{Corollary}
\newtheorem{prop}[thm]{Proposition}
\newtheorem{lm}[thm]{Lemma}

\theoremstyle{remark}
\newtheorem{rmk}[thm]{Remark}

\newtheorem{examp}[thm]{Example}

\newcommand{\coloneqq}{\mathrel{\mathop:}=}

\renewcommand{\Re}{{\rm Re}\,}

\newcommand{\eps}{\varepsilon}

\newcommand{\sg}{\emph{sign}}

\newcommand{\R}{\mathds{R}}
\newcommand{\C}{\mathds{C}}
\newcommand{\K}{\mathds{K}}
\newcommand{\N}{\mathds{N}}

\newcommand{\tnorm}[1]{{|\kern-0.3ex|\kern-0.3ex|#1|\kern-0.3ex|\kern-0.3ex|}}

\begin{document}
\title{Generation of semigroup for symmetric matrix Schr\"odinger operators in $L^p$-spaces}
%\author{M. Kunze}
%\address{Universit\"at Konstanz, Fachbereich Mathematik und Statistik, 78457 Konstanz, Germany}
%\email{markus.kunze@uni-konstanz.de}
\author{A. Maichine}
\address{Dipartimento di Matematica, Universit\`a degli Studi di Salerno, Via Giovanni Paolo II 132, I-84084 Fisciano (SA), Italy}
\email{amaichine@unisa.it}
%\author{A. Rhandi}
%\address{Dipartimento di Ingegneria dell'Informazione, Ingegneria Elettrica e Matematica Applicata, Università degli Studi di Salerno, Via Ponte Don Melillo 1, 84084 Fisciano (Sa), Italy}
%\email{arhandi@unisa.it}
\date{}
\keywords{Form methods, Schr\"odinger operator, matrix potential, Beurling-Denny criterion, compactnes}
\subjclass[2010]{Primary: 35J10, 35J47; Secondary: 47D06, 35J50}
\begin{abstract}
In this paper we establish generation of analytic strongly continuous semigroup in $L^p$--spaces for the symmetric matrix Schr\"odinger operator $div(Q\nabla u)-Vu$, where, for every $x\in\R^d$, $V(x)=(v_{ij}(x))$ is a semi-definite positive and symmetric matrix. The diffusion matrix $Q(\cdot)$ is supposed to be strongly elliptic and bounded and the potential $V$ satisfies the weak condition $v_{ij}\in L^1_{loc}(\R^d)$, for all $i,j\in\{1,\dots,m\}$. We also characterize positivity of the semigroup and we investigate on its compactness. 
\end{abstract}
\maketitle
\section{Introduction}
Parabolic systems {with unbounded coefficients have become} an interesting topic thanks to their application in the study of several phenomena that come from {several sciences such as} economics, physics, chemistry, etc. %As examples of application of such systems we cite Navier-Stokes equations, the study of backward--forward stochastic differential systems and the study of Nash equilibria to stochastic differential games. For further details see \cite[Section 6]{aalt} and \cite{Ha-He}.
The semigroup theory allows solving {autonomous linear} parabolic systems by studying the properties of the associated second order differential operator; the so--called vector--valued elliptic operator. In the case of absence of a drift term, one obtains a Schr\"odinger operator with matrix potential which we call, in this paper, matrix Schr\"odinger operator. Such operators have the general form $\mathcal{A}=div(Q\nabla\cdot)-V$, where $Q$ and $V$ are, respectively, the diffusion and potential matrices. The matrix Schr\"odinger operator appears, in non--relativistic mechanical quantum, as the Hamiltonian for a system of interacting adsorbate and substrate atoms. The entries of the potential matrix $V$ represent the interparticle electrical
interactions; namely, electron--electron repulsions, electron--nuclear attractions
and nuclear--nuclear repulsions, see \cite{WTA04} and \cite{WRL08}.

{Recently, in \cite{KLMR}, the authors have considered a matrix Schr\"odinger operator of type $\mathcal{A}$ and they have shown, by application of a noncommutative version of the Dore-Venni theorem,  the generation of {a} semigroup in $L^p(\R^d,\R^m)$, $p\in(1,\infty)$, %and got a domain embedded in $W^{2,p}(\R^d,\R^m)$
under smoothness  and growth assumptions on $Q$ and $V$. % the semigroup is analytic when the potential matrix is symmetric. 
Further properties of the semigroup like compactness and positivity have been investigated. Afterward, in \cite{Mai-Rha}, similar results as in \cite{KLMR} have been obtained for matrix potentials with diagonal entries of polynomial growth. Moreover, kernel estimates for the associated semigroup have been investigated and the asymptotic distribution of eigenvalues of the matrix Schr\"odinger operator has been established.}
  
 In this article we associate a sesquilinear form, in $L^2(\R^d,\C^m)$, to the matrix Schr\"odinger operator $\mathcal{A}u=(div(Q\nabla u_j))_j-Vu$, $u=(u_1,\dots,u_m)$, where $V=(v_{ij})_{1\le i,j\le m}$ is a symmetric matrix-valued function. As in the scalar case, see \cite[Section 1.8]{Davies}, and under the weakest condition $v_{ij}\in L^1_{\emph{loc}}(\R^d)$, $i,j\in\{1,\dots,m\}$, we prove that $\mathcal{A}$ admits a dissipative self-adjoint realization in $L^2(\R^d,\C^m)$ and we extrapolate its associated semigroup to the spaces $L^p(\R^d,\C^m)$, using a vectorial version of 'Beurling-Denny' criterion of $L^\infty$-contractivity. We also investigate on some properties of the semigroup.
 
  This article is structured as follow: In section 2 we study the associated form to $\mathcal{A}$ and show that $\mathcal{A}$ has a self-adjoint realization $A$ that generates an analytic strongly continuous semigroup $\{T(t)\}_{t\ge 0}$ in $L^2(\R^d,\C^m)$. In section 3, we apply a 'Beurling-Denny' criterion type, see \cite[Theorem 2]{Ouhabaz 99}, to establish $L^\infty$-contractivity of the semigroup $\{T(t)\}_{t\ge 0}$ and thus extrapolate it to $L^p(\R^d,\C^m)$. Section 4 is devoted to characterize positivity, study compactness of the semigroup and analyze the spectrum of $A$. 
  \subsection*{Notation} Throughout this paper we adopt the following notation: $d,m\in\N$, $\K=\R$ or $\K=\C$, $\langle\cdot,\cdot\rangle$ the inner-product of $\K^j$, $j=d,m$.  
  $L^p(\R^d,\K^m)$, $1<p<\infty$, denotes the vectorial Lebesgue space endowed with the norm $$\|\cdot\|_p: f=(f_1,\dots,f_m)\mapsto\|f\|_p\coloneqq \left(\int_{\R^d}(\sum_{j=1}^{m}|f_j|^2)^{\frac{p}{2}}dx\right)^{\frac{1}{p}}.$$
  $H^1(\R^d)$ refers to the classical Sobolev space of order $1$ over $L^2(\R^d)$. $H^1(\R^d,\R^m)$ is the vectorial Sobolev space constitueted of vectorial function $f=(f_1,\dots,f_m)$ such that $f_j\in H^1(\R^d)$, for all $j\in\{1,\dots,m\}$. We note that all the derivatives are considered in the distribution sense. For $y=(y_1,\dots,y_m)\in\R^m$, we write $y\ge 0$ if $y_j\ge 0$ for all $j\in\{1,\dots,m\}$. For $r>0$, $B(r)=\{x\in\R^d : |x|<1 \}$ denotes the euclidean open ball of $\R^d$ of center $0$ and radius $r$. $\chi_E$ is the characteristic function of the set $E$.
\section{Generation of the semigroup in $L^2$}
Throughout we assume the following hypotheses
\subsection{Hypotheses}\label{Hypo}
\begin{enumerate}[(a)]
\item Let $Q:\R^d\to\R^{d\times d}$ be a symmetric matrix-valued function. Assume that there exist positive numbers $\eta_1$ and $\eta_2$ such that
\begin{equation}\label{ellipticity of q}
\eta_1|\xi|^2\le\langle Q(x)\xi,\xi\rangle\le\eta_2|\xi|^2,\qquad x,\xi\in\R^d.
\end{equation}
\item Let $V:\R^d\to\R^{m\times m}$ be a matrix-valued operator such that $v_{ij}=v_{ji}\in L^1_{\emph{loc}}(\R^d)$ for all $i,j\in\{1,\dots,m\}$ and
\begin{equation}\label{accretivity of V}
\langle V(x)\xi,\xi\rangle\ge 0,\qquad x\in\R^d,\;\xi\in\R^m.
\end{equation}
\end{enumerate}
We introduce, for $x\in\R^d$, the inner-product $\langle\cdot,\cdot\rangle_{Q(x)}$ given, for $y,z\in\C^d$, by $\langle y,z\rangle_{Q(x)}:=\langle Q(x)y,z\rangle$ and its associated norm $|z|_{Q(x)}:=\sqrt{\langle Q(x)z,z\rangle}$ for each $z\in\C^d$.
\subsection{The $L^2$-sesquilinear form}
Let us define the sesquilinear form
\begin{equation}\label{explicit form}
a(f,g):=\int_{\R^d}\sum_{j=1}^{m}\langle Q(x)\nabla f_j(x),\nabla g_j(x)\rangle dx+\int_{\R^d}\langle V(x)f(x),g(x)\rangle dx, 
\end{equation}
for $f,g\in D(a)$, where $D(a)$, the domain of $a$, is defined by
\begin{equation}\label{domain of form}
D(a)=\{f=(f_1,\dots,f_m)\in H^1(\R^d,\C^m) :\int_{\R^d}\langle V(x)f(x),f(x)\rangle dx < +\infty \}.
\end{equation}
We endow $D(a)$ with the norm
\begin{align*}
\|f\|_a &=\left(\|f\|_{H^1(\R^d,\C^m)}^2+\int_{\R^d}\langle V(x)f(x),f(x)\rangle dx\right)^{1/2}\\
&=\left(\|f\|_{L^2(\R^d,\C^m)}^2+\sum_{j=1}^{m}\|\nabla f_j\|_{L^2(\R^d,\C^m)}^2+\int_{\R^d}\langle V(x)f(x),f(x)\rangle dx\right)^{1/2}.
\end{align*}
We now give some properties of $a$
%The sesquilinear form $a$ satisfies the following properties.
\begin{prop}\label{properties of the form}
Assume Hypotheses \ref{Hypo} are satisfied. Then,
\begin{enumerate}
\item [i)] $a$ is densely defined, i.e., $D(a)$ is dense in $L^2(\R^d,\R^m)$.
\item [ii)] $a$ is accretive.
\item [iii)] $a$ is contiuous, i.e., exists $M>0$ such that
\[ |a(f,g)|\leq M\|f\|_a\|g\|_a,\qquad f,g\in D(a). \]
\item [iv)] $a$ is closed i.e., $(D(a),\|.\|_a)$ is a complete space.
\end{enumerate}
%Moreover, $C_c^\infty(\R^N,\R^m)$ is a core for $a$, i.e., for every $f\in D(a)$ there exists $(u_n)_{n\in\N}\subset C_c^\infty(\R^N,\R^m)$ such $a(u_n-u)$ tends to $0$ as $n$ tends to $\infty$.
\end{prop}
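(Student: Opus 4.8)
The plan is to verify the four assertions in the order listed, treating closedness (iv) as the principal difficulty.

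For density (i), the key observation is that $C_c^\infty(\R^d,\C^m)\subset D(a)$. Indeed, if $f$ is smooth with compact support $K$, then $f$ is bounded and $\int_{\R^d}\langle V f,f\rangle\,dx=\int_K\langle V f,f\rangle\,dx\le\|f\|_\infty^2\int_K\sum_{i,j}|v_{ij}|\,dx<\infty$, since $v_{ij}\in L^1_{loc}(\R^d)$. As $C_c^\infty(\R^d,\C^m)$ is already dense in $L^2(\R^d,\C^m)$ and is contained in $D(a)$, assertion (i) is immediate.

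For accretivity (ii) and continuity (iii), I would argue pointwise and then integrate. Because $Q(x)$ is real symmetric, for each $j$ the integrand $\langle Q(x)\nabla f_j,\nabla f_j\rangle=|\nabla f_j(x)|_{Q(x)}^2\ge 0$, and by Hypothesis (b) the map $(\xi,\zeta)\mapsto\langle V(x)\xi,\zeta\rangle$ is a positive semidefinite sesquilinear form on $\C^m$, so $\langle V(x)f(x),f(x)\rangle\ge 0$; summing and integrating gives $a(f,f)\ge 0$, which is accretivity. For continuity I would bound the two terms of $a$ separately. The diffusion term is controlled by Cauchy--Schwarz for the $Q(x)$-inner product together with the upper ellipticity bound, giving $|\langle Q\nabla f_j,\nabla g_j\rangle|\le\eta_2|\nabla f_j||\nabla g_j|$, and then Cauchy--Schwarz in $L^2$ yields a bound by $\eta_2\|f\|_a\|g\|_a$. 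The potential term is handled by the pointwise Cauchy--Schwarz inequality for the semi-inner-product induced by $V(x)$, namely $|\langle Vf,g\rangle|\le\langle Vf,f\rangle^{1/2}\langle Vg,g\rangle^{1/2}$, followed once more by Cauchy--Schwarz in $L^2$, which bounds this term by $\left(\int\langle Vf,f\rangle\right)^{1/2}\left(\int\langle Vg,g\rangle\right)^{1/2}\le\|f\|_a\|g\|_a$. Taking $M=\eta_2+1$ then finishes (iii).

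The main work lies in closedness (iv). Given a $\|\cdot\|_a$-Cauchy sequence $(f^{(n)})$, its $H^1$-part is Cauchy, so $f^{(n)}\to f$ in $H^1(\R^d,\C^m)$ for some $f$, and after passing to a subsequence I may assume $f^{(n)}\to f$ almost everywhere. It remains to prove that $f\in D(a)$ and that the $V$-seminorm part of $\|f^{(n)}-f\|_a$ vanishes in the limit. The crucial tool is Fatou's lemma applied to the nonnegative integrands $\langle V(x)(f^{(n)}(x)-f^{(k)}(x)),(f^{(n)}(x)-f^{(k)}(x))\rangle$: letting $n\to\infty$ along the a.e.-convergent subsequence gives $\int_{\R^d}\langle V(f-f^{(k)}),(f-f^{(k)})\rangle\,dx\le\liminf_n\int_{\R^d}\langle V(f^{(n)}-f^{(k)}),(f^{(n)}-f^{(k)})\rangle\,dx$, and the right-hand side is arbitrarily small for large $k$ by the Cauchy property. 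This simultaneously yields $\int\langle Vf,f\rangle\,dx<\infty$, hence $f\in D(a)$, and the convergence of the $V$-part; combined with the $H^1$-convergence it gives $\|f^{(n)}-f\|_a\to 0$. The one point demanding care is the passage between the single a.e.-convergent subsequence and the full sequence, which is resolved by the standard fact that a Cauchy sequence possessing a convergent subsequence converges to the same limit.
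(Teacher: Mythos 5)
Your proof is correct. Parts (i)--(iii) follow the paper essentially verbatim: density via $C_c^\infty(\R^d,\C^m)\subset D(a)$ (you supply the short $L^1_{loc}$ justification the paper leaves implicit), accretivity by pointwise nonnegativity of both integrands, and continuity by Cauchy--Schwarz for the $Q(x)$- and $V(x)$-semi-inner products followed by Cauchy--Schwarz in $L^2$, yielding the same constant $M=1+\eta_2$. The only genuine divergence is in the closedness proof (iv). The paper observes that a $\|\cdot\|_a$-Cauchy sequence makes $(V^{1/2}f_n)$ Cauchy in $L^2(\R^d,\C^m)$, invokes completeness of $L^2$ to get a limit $g$, and identifies $g=V^{1/2}f$ through a.e.\ convergence of subsequences; you instead apply Fatou's lemma directly to the nonnegative quadratic forms $\langle V(f^{(n)}-f^{(k)}),f^{(n)}-f^{(k)}\rangle$ along an a.e.-convergent subsequence, obtaining $\int\langle V(f-f^{(k)}),f-f^{(k)}\rangle\,dx\le\liminf_n\int\langle V(f^{(n)}-f^{(k)}),f^{(n)}-f^{(k)}\rangle\,dx$, which is small for large $k$ by the Cauchy property. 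Both arguments are sound and of comparable length. Your route has the small advantage of never introducing the measurable square root $V^{1/2}(x)$ or the identification of its $L^2$-limit, working only with the quadratic form itself; the paper's route makes the structure ``$D(a)$ is the domain of the closure of a sum of two closable pieces'' slightly more visible. One point you should make explicit if you write this up: concluding $f\in D(a)$ from the smallness of $\int\langle V(f-f^{(k)}),f-f^{(k)}\rangle\,dx$ uses the triangle inequality for the seminorm $f\mapsto(\int\langle Vf,f\rangle\,dx)^{1/2}$, which is legitimate precisely because $V(x)$ is symmetric positive semidefinite.
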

\begin{proof}
\begin{enumerate}
\item [i)] It is obvious that $C_c^\infty(\R^d,\R^m)\subset D(a)$. Hence, $D(a)$ is dense in $L^2(\R^d,\C^m)$.
 %In fact, $C_c^\infty(\R^d,\R^m)\subset H^1(\R^d,\R^m)$ and, for $f\in C_c^\infty(\R^d,\R^m)$
%\begin{align*}
%\left|\int_{\R^d}\langle V(x)f(x),\bar{f}(x)\rangle dx\right|&\le \int_{\R^d}|V(x)f(x)||f(x)|dx\\
%&\le  \int_{\R^d}|V(x)||f(x)|^2 dx\\
%&\le  \|f\|_\infty^2 \int_{supp(f)}|V(x)|dx<\infty.
%\end{align*}
\item [ii)] \emph{Accretivity:} For $f\in D(a)$ one has
\[\Re\, a(f)=\int_{\R^d}\sum_{j=1}^{m}|Q^{1/2}(x)\nabla f_j(x)|^2 dx+\int_{\R^d}\Re\langle V(x)f(x),f(x)\rangle dx\ge 0.\] 
\item [iii)] \emph{Continuity:} Let $f,g\in D(a)$. By application of Cauchy--Schwartz and Young inequalities one gets
\begin{align*}
|a(f,g)|&\le  \eta_2\sum_{j=1}^{m}\int_{\R^d}|\nabla f_j(x)||\nabla g_j(x)|dx+\int_{\R^d}|\langle V(x)^{1/2}f(x),V(x)^{1/2}g(x)\rangle| dx\\
&\le  \eta_2\sum_{j=1}^{m}\|\nabla f_j\| \|\nabla g_j\|+\left(\int_{\R^d}|V(x)^{1/2}f(x)|^2 dx\right)^{1/2}\left(\int_{\R^d}|V(x)^{1/2}g(x)|^2 dx\right)^{1/2}\\
&\le  \eta_2\left(\sum_{j=1}^{m}\|\nabla f_j\|_{2}^2\right)^{\frac{1}{2}}\left(\sum_{j=1}^{m}\|\nabla g_j\|_{2}^2\right)^{\frac{1}{2}}+\left(\int_{\R^d}\langle V(x)f(x),f(x)\rangle dx\right)\left(\int_{\R^d}\langle V(x)g(x),g(x)\rangle dx\right)\\
&\le  (1+\eta_2)\|f\|_{a}\|g\|_{a}.
\end{align*}
\item [iv)] \emph{Closedness:} Let $(f_n)_{n\in\N}\subset D(a)$ be a Cauchy sequence in $(D(a),\|\cdot\|_a)$. %i.e., $\|f_n-f_l\|_a$ tends to $0$ as $n$ and $l$ tend to $\infty$.
Then,
\[ \|f_n-f_l\|_{H^1(\R^d,\R^m)}+\int_{\R^d}\langle V(x)(f_n(x)-f_l(x)),(f_n(x)-f_l(x)\rangle dx\underset{n,l\rightarrow\infty}{\longrightarrow}0, \]
which yields
\[\begin{cases}
f_n-f_l\underset{n,l\rightarrow\infty}{\longrightarrow} 0 \quad in\quad H^1(\R^d,\C^m)\\
\int_{\R^d}|V^{1/2}(f_n-f_l)|^2\underset{n,l\rightarrow\infty}{\longrightarrow} 0
\end{cases}.
\]
Hence, $(f_n)_{n\in\N}$ and $(V^{1/2}f_n)_{n\in\N}$ are Cauchy sequences respectively in $H^1(\R^d,\C^m)$ and $L^2(\R^d,\C^m)$. Therefore
\[\begin{cases}
f_n\longrightarrow f\quad in\quad H^1(\R^d,\C^m)\\
V^{1/2}f_n\longrightarrow g\quad in\quad L^2(\R^d,\C^m)
\end{cases}.
\]
The pointwise convergence of subsequences implies that 
\[ V^{1/2}f=g\in L^2(\R^d,\C^m).\]
Then $f\in D(a)$ and
\[a(f_n-f)=\|f_n-f\|_{H^1(\R^d,\C^m)}^2+\int_{\R^d}|V^{1/2}(x)(f_n-f)(x)|^2 dx\underset{n\rightarrow\infty}{\longrightarrow}0,\]
which ends the proof.
\end{enumerate}
\end{proof}
Now define 
\begin{equation}\label{expression of A}
\mathcal{A}f:=div(Q\nabla f)-Vf=(div(Q\nabla f_j))_{1\le j\le m}-Vf.
\end{equation}
Thanks to proposition \ref{properties of the form} and applying \cite[Proposition 1.22]{Ouha-book} and the well-known Lumer-Phillips theorem, \cite[Chap-II, Theorem 3.15]{Nagel}, one obtains
\begin{coro}\label{coro gen sem in L2}
$\mathcal{A}$ admits a realization $A$ in $L^2(\R^d,\C^m)$ that generates a bounded strongly continuous semigroup $(T(t))_{t\ge 0}$ which is analytic in the open right half plane of $\C$. Moreover, $A$ is selfadjoint and $-A$ is the linear operator associated to the form $a$.
\end{coro}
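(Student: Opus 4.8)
The plan is to deduce the whole statement from the properties of $a$ established in Proposition \ref{properties of the form} by means of the first representation theorem for sesquilinear forms, and then to identify the resulting operator with a realization of $\mathcal{A}$. Proposition \ref{properties of the form} shows that $a$ is densely defined, accretive, continuous and closed on $H:=L^2(\R^d,\C^m)$; equivalently, $a$ is a closed, densely defined, sectorial form whose numerical range lies in $[0,\infty)$. These are precisely the hypotheses of \cite[Proposition 1.22]{Ouha-book}, which associates with $a$ a closed, densely defined operator $B$ on $H$, given by
\[
D(B)=\Big\{f\in D(a):\ \exists\,h\in H\ \text{with}\ a(f,g)=\langle h,g\rangle\ \text{for all}\ g\in D(a)\Big\},\qquad Bf=h,
\]
and which guarantees that $-B$ generates a strongly continuous analytic semigroup on $H$. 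So first I would invoke this result to produce $B$ and set $A:=-B$; by construction $-A=B$ is the operator associated with $a$ and $A$ generates the semigroup $(T(t))_{t\ge0}$.

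Next I would check that $A$ is a realization of $\mathcal{A}$. For $f\in C_c^\infty(\R^d,\C^m)$ and any $g\in D(a)$, integrating by parts in the first integral of \eqref{explicit form} yields
\[
a(f,g)=\int_{\R^d}\Big\langle\big(-div(Q\nabla f_j)\big)_{1\le j\le m}+Vf,\ g\Big\rangle\,dx=\langle-\mathcal{A}f,\ g\rangle,
\]
so that $Bf=-\mathcal{A}f$ on $C_c^\infty(\R^d,\C^m)$; hence $A=-B$ extends $\mathcal{A}|_{C_c^\infty(\R^d,\C^m)}$ and is the $L^2$-realization of $\mathcal{A}=div(Q\nabla\cdot)-V$ selected by the form.

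It remains to obtain self-adjointness, boundedness and analyticity in the right half-plane. Since $Q$ and $V$ are symmetric matrix-valued functions, $a$ is symmetric, $a(f,g)=\overline{a(g,f)}$; a densely defined, closed, accretive symmetric form has a non-negative self-adjoint associated operator, so $B=-A$ is self-adjoint with $B\ge0$, whence $A$ is self-adjoint and $A\le0$. Accretivity of $a$ makes $A$ dissipative, and for $\lambda>0$ the resolvent $(\lambda-A)^{-1}=(\lambda+B)^{-1}$ exists with norm at most $1/\lambda$ because $B\ge0$; the Lumer--Phillips theorem \cite[Chap-II, Theorem 3.15]{Nagel} then gives a $C_0$-semigroup of contractions, which is in particular bounded. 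Analyticity on the whole open right half-plane finally follows because a self-adjoint operator bounded above generates a semigroup that is analytic of half-angle $\pi/2$.

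The argument is essentially an assembly of the quoted abstract results, so no genuine analytic difficulty arises. The only point demanding attention is the identification of $A$ with $\mathcal{A}$ under the sole assumption $v_{ij}\in L^1_{loc}(\R^d)$: one must be sure that the integration by parts above and the abstract description of $D(B)$ remain meaningful in this low-regularity setting and that $C_c^\infty(\R^d,\C^m)$ is a core for the form. Both facts are already secured by the density and closedness parts of Proposition \ref{properties of the form}, which is exactly why the corollary follows immediately.
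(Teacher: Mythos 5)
Your argument is correct and follows essentially the same route as the paper, which proves this corollary simply by citing Proposition \ref{properties of the form} together with \cite[Proposition 1.22]{Ouha-book} and the Lumer--Phillips theorem \cite[Chap-II, Theorem 3.15]{Nagel}; you have merely filled in the details (representation of the form, identification with $\mathcal{A}$ on test functions, self-adjointness from symmetry of $a$, analyticity of angle $\pi/2$). The only caveat is your closing claim that $C_c^\infty(\R^d,\C^m)$ is a core for the form: Proposition \ref{properties of the form} only gives $C_c^\infty(\R^d,\C^m)\subset D(a)$ and density of $D(a)$ in $L^2(\R^d,\C^m)$, not density of $C_c^\infty$ in $(D(a),\|\cdot\|_a)$, but this core property is not needed for the statement being proved.
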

\begin{rmk}
The form method does not apply for non symmetric potentials. In fact, in \cite[Example 3.5]{KLMR} it has been proved that the semigroup associated to a matrix Schr\"odinger operator with matrix potential 
\[V(x)=\begin{pmatrix}
0 & -x\\
x & 0
\end{pmatrix},\qquad x\in\R,\]
 is not analytic. Otherwise, we show by straighforward computation that the continuity property of the form $a$ fails when one takes instead of a symmetric potential the above antisymmetric one $V$.
Indeed, let $\varphi\in C_c^\infty(\R^d)$ such that $\chi_{B(1)}\le\varphi\le\chi_{B(2)}$. Consider, for $n\ge 1$, $$f_n(x)=\frac{\varphi(x/n)}{\sqrt{1+|x|^2}}e_1\quad \emph{and}\quad g_n(x)=\frac{\varphi(x/n)}{\sqrt{1+|x|^2}}e_2$$ where $\{e_1,e_2\}$ is the canonical basis of $\R^2$. %One has 
%\[\nabla f_1^{(n)}(x)=\nabla g_2^{(n)}(x)=-\dfrac{\varphi(x/n)}{(1+|x|^2)^{\frac{3}{2}}}x+\frac{1}{n}\frac{1}{\sqrt{1+|x|^2}}\nabla\varphi(x/n)\]
 %and
%\[ \nabla f_2^{(n)}(x)=\nabla g_1^{(n)}(x)=0,\qquad x\in\R^2.\]
Since $V=-V^*$ then $\langle V(x)\xi,\xi\rangle=0$, for all $\xi\in\R^2$. Thus
\[a(f_n)=a(g_n)=\int_{\R}\left|-\dfrac{\varphi(x/n)}{(1+|x|^2)^{\frac{3}{2}}}x+\frac{1}{n}\frac{1}{\sqrt{1+|x|^2}}\nabla\varphi(x/n)\right|^2 dx\]
and
\[|a(f_n,g_n)|=\int_{\R^d}\dfrac{|x|}{(1+|x|^2)}\varphi(x/n)dx.\]
If the continuity property -Proposition \ref{properties of the form} (iii)- of the form were satisfied, then there will exist $C>0$ such that 
\[|a(f_n,g_n)|\le C\|f_n\|_a\|g_n\|_a=C(\|f_n\|_{L^2(\R,\R^2)}^2+a(f_n)).\]
By the Lebesgue dominated convergence theorem one can let $n$ tends to $\infty$ and obtains
\[ \int_{\R}\dfrac{|x|}{1+|x|^2}dx\le C\left(\int_{\R^d}\dfrac{|x|^2}{(1+|x|^2)^3}dx+\int_{\R^d}\dfrac{1}{1+|x|^2}dx\right)<\infty. \]
However, the integral of the left-hand side is infinite.
\end{rmk}
\section{Extension to $L^p$}
In this section we will show that $\mathcal{A}$ has a $L^p$-realization which generates a holomorphic semigroup in $L^p(\R^d,\C^m)$, $1<p<\infty$. In order to do so we prove that, for every $t>0$, the restriction $T(t)_{|L^2\cap L^\infty}$ of $T(t)$ to $L^2(\R^d,\C^m)\cap L^\infty(\R^d,\C^m)$ can be extended to a bounded operator $T_p(t)$ in $L^p(\R^d,\C^m)$, $2<p<\infty$. Then, we show that $(T_p(t))_{t\ge 0}$ is strongly continuous. Moreover, since $(T(t))_{t\ge 0}$ is self-adjoint, the semigroups $(T_p(t))_{t\ge 0}$, for $p\in(1,2)$ will obtained by duality arguments. For this aim it suffices that $(T(t))_{t\ge 0}$ satisfy the following $L^\infty$-contractivity property:
\begin{equation}\label{L^8 contractivity}
\|T_2(t)f\|_\infty\le \|f\|_\infty,\qquad \forall f\in L^2(\R^d,\R^m)\cap L^\infty(\R^d,\R^m).
\end{equation}
A characterisation of \eqref{L^8 contractivity} via the associated form is given by E.M. Ouhabaz in \cite[Theorem2]{Ouhabaz 99}. According to this characterisation, \eqref{L^8 contractivity} holds true if the following is satisfied
\begin{prop}\label{Beurling-Deny condition}
Let $f\in H^1(\R^d,\R^m)$ and set $\sg(f)\coloneqq\frac{f}{|f|}\chi_{\{f\neq 0\}}$. Then,
\begin{enumerate}[(i)]
\item $f\in D(a)$ implies $(1\wedge |f|)\sg(f)\in D(a)$;
\item for every $f\in D(a)$, one has
\[a((1\wedge |f|)\sg(f))\le a(f). \]
\end{enumerate}
\end{prop}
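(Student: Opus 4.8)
The plan is to verify the two Beurling-Denny conditions by reducing the vectorial problem to the scalar theory. The crucial object is the map $f \mapsto (1\wedge|f|)\sg(f)$, which I denote $\Phi(f)$; geometrically this is the radial projection of the $\C^m$-valued function $f(x)$ onto the closed unit ball of $\C^m$, i.e. $\Phi(f)(x) = f(x)$ when $|f(x)|\le 1$ and $\Phi(f)(x) = f(x)/|f(x)|$ when $|f(x)|>1$. Since the map $P\colon \C^m \to \C^m$, $P(y) = (1\wedge|y|)\,y/|y|$ (with $P(0)=0$) is the nearest-point projection onto a convex set, it is $1$-Lipschitz: $|P(y)-P(z)|\le|y-z|$ for all $y,z$. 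The first thing I would establish is that $\Phi(f)\in H^1(\R^d,\C^m)$ whenever $f\in H^1(\R^d,\C^m)$; this follows from the chain rule for Sobolev functions composed with Lipschitz maps, and it simultaneously gives the pointwise gradient bound I need below. Having this, the membership claim $(1\wedge|f|)\sg(f)\in D(a)$ in part (i) is immediate: $|\Phi(f)(x)|\le\min(|f(x)|,1)$ pointwise, so $\langle V\Phi(f),\Phi(f)\rangle \le \langle Vf,f\rangle$ by a monotonicity argument on the quadratic form $\xi\mapsto\langle V(x)\xi,\xi\rangle$, whence the potential integral stays finite.

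The heart of the matter is the inequality $a(\Phi(f))\le a(f)$ in part (ii), which splits along the two terms of $a$. For the potential term I would argue pointwise: for a fixed $x$, since $V(x)\ge 0$ is symmetric, the function $\xi\mapsto\langle V(x)\xi,\xi\rangle$ is a nonnegative quadratic form, and I must show $\langle V(x)P(y),P(y)\rangle\le\langle V(x)y,y\rangle$ for every $y\in\C^m$. This is clear on the set where $|y|\le1$ (there $P(y)=y$), and on $\{|y|>1\}$ we have $P(y)=y/|y|$ with $|y|>1$, so $\langle V(x)P(y),P(y)\rangle = |y|^{-2}\langle V(x)y,y\rangle\le\langle V(x)y,y\rangle$ because $\langle V(x)y,y\rangle\ge0$ and $|y|^{-2}<1$. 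Integrating over $\R^d$ controls the potential part. For the diffusion term I need the pointwise gradient estimate coming from the Lipschitz chain rule. Writing the $j$-th component derivative of $\Phi(f)$ via the Jacobian $DP$, the fact that $P$ is the projection onto a convex set means $DP(y)$ is, wherever it exists, an orthogonal projection (hence a contraction) in the Euclidean metric; the key algebraic point I would verify is that this contractivity persists in the $Q(x)$-weighted inner product, giving $\sum_j|Q^{1/2}(x)\nabla\Phi(f)_j(x)|^2\le\sum_j|Q^{1/2}(x)\nabla f_j(x)|^2$ almost everywhere.

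I expect the diffusion term to be the main obstacle, precisely because the natural contractivity of $DP$ is Euclidean while the form weights gradients by $Q(x)$. The cleanest route is an explicit computation on the set $\{|f|>1\}$ (on $\{|f|\le1\}$ the two gradients agree, and on $\{|f|=1\}$ the gradient of $|f|$ vanishes a.e. by the coarea/level-set argument, so there is no boundary contribution). On $\{|f|>1\}$ one has $\Phi(f) = f/|f|$, and differentiating gives, for the matrix $G$ whose columns are $\nabla\Phi(f)_j$, the identity $G = |f|^{-1}\big(F - |f|^{-2}\,f\,(f^\ast F)\big)$ where $F$ has columns $\nabla f_j$ and $f^\ast F$ denotes the appropriate row vector of inner products. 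The point is that the correction subtracts off exactly the component of the gradient pointing in the radial direction $f/|f|$. Because $Q(x)$ acts only on the spatial gradient index (the $d$-dimensional index) and not on the component index $j$, the $Q(x)$-weighted quadratic form in the gradients factors through this orthogonal decomposition componentwise, and the subtracted radial piece can only decrease the weighted norm; making this bookkeeping precise, and confirming that no cross terms spoil the inequality, is the delicate step. Once the two pointwise inequalities are in hand, integrating and summing yields $a(\Phi(f))\le a(f)$, completing both parts.
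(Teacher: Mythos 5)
Your route is sound and reaches both conclusions, but it is genuinely different in method from the paper's. You treat $\Phi(f)=(1\wedge|f|)\sg(f)$ as the composition of $f$ with the nearest-point projection $P$ onto the closed unit ball of $\R^m$, invoke the chain rule for Sobolev functions composed with Lipschitz maps to get $\Phi(f)\in H^1(\R^d,\R^m)$, and then control the diffusion term by showing that the Jacobian factor $M=|f|^{-1}\bigl(I-|f|^{-2}ff^{T}\bigr)$ acting on the \emph{component} index satisfies $MM^{T}\le I$, so that $\sum_j|Q^{1/2}\nabla\Phi(f)_j|^2=\tr\bigl((F^{T}QF)(MM^{T})\bigr)\le\tr(F^{T}QF)$ because $Q$ acts only on the spatial index; this is exactly the right way to dispose of the worry you raise about $Q$-weighting, and it makes the vanishing of cross terms automatic. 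The paper instead proceeds by hand: it regularizes, with $f_\eps=(\sum_jf_j^2+\eps^2)^{1/2}-\eps$ and $P_\eps f=(1\wedge|f|)f/(|f|+\eps)$, to obtain the explicit a.e.\ formula for $\nabla\Phi(f)_j$ by dominated convergence, and then expands the $Q$-weighted square directly, observing that the cross term is identically zero since $\sum_jf_j(\nabla f_j-\tfrac{f_j}{|f|}\nabla|f|)=0$, and that the coefficient $\tfrac{(1+\sg(1-|f|))^2}{4}-\tfrac{(1\wedge|f|)^2}{|f|^2}$ of $|\nabla|f||_Q^2$ is nonpositive. Your version buys conceptual economy (convexity and a trace inequality replace the case analysis) at the cost of importing the vector-valued Lipschitz chain rule, which the paper avoids by doing the regularization explicitly. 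Two small imprecisions to fix: the derivative $DP(y)$ for $|y|>1$ is $|y|^{-1}$ times an orthogonal projection, not an orthogonal projection itself (it is still a contraction, which is all you use); and in part (i) the bound on the potential integral should not be phrased as ``monotonicity of $\xi\mapsto\langle V\xi,\xi\rangle$ in $|\xi|$'' (false for general $V\ge0$) but rather, as you correctly do later, as the fact that $\Phi(f)(x)=\lambda(x)f(x)$ with $0\le\lambda\le1$, so $\langle V\Phi(f),\Phi(f)\rangle=\lambda^2\langle Vf,f\rangle\le\langle Vf,f\rangle$.
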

Now we proceed to prove the above proposition. As a first step, we establish the assertion (i) of the above proposition in the following lemma  
\begin{lm}\label{H^1 stable under |.| and P8}
\begin{itemize}
\item[a)] Assume $f\in H^1(\R^d,\R^m)$. Then, $|f|\in H^1(\R^d)$ and
\begin{equation}\label{gradient of |.|}
\nabla |f|=\dfrac{\sum_{j=1}^{m}f_j\nabla f_j}{|f|}\chi_{\{f\neq 0\}}.
\end{equation}
\item[b)] Let $f\in D(a)$. Then $(1\wedge |f|)\sg(f)\in D(a)$. In particular,
\begin{align}
\nabla((1\wedge |f|)\sg(f))_j&=\dfrac{1+\sg(1-|f|)}{2}\frac{f_j}{|f|}\chi_{\{f\neq 0\}}\nabla|f|\\\nonumber
&+\dfrac{1\wedge|f|}{|f|}(\nabla f_j-\frac{f_j}{|f|}\nabla|f|)\chi_{\{f\neq 0\}},
\end{align}
for every $j\in\{1,...,m\}$.
\end{itemize}
\end{lm}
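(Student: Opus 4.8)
The plan is to prove (a) by a regularisation argument and then deduce (b) from (a) together with the scalar chain rule, the product/quotient rule, and a level-set argument.

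For part a) I would regularise the Euclidean norm: for $\eps>0$ set $|f|_\eps:=\sqrt{|f|^2+\eps^2}=\big(\sum_{j=1}^m f_j^2+\eps^2\big)^{1/2}$. Since $y\mapsto\sqrt{|y|^2+\eps^2}$ is smooth with bounded gradient, the standard chain rule for $H^1$ functions gives $|f|_\eps-\eps\in H^1(\R^d)$ with $\nabla|f|_\eps=\big(\sum_j f_j\nabla f_j\big)/\sqrt{|f|^2+\eps^2}$. Cauchy--Schwarz yields the $\eps$-uniform pointwise bound $\big|\nabla|f|_\eps\big|\le\big(\sum_j|\nabla f_j|^2\big)^{1/2}\in L^2(\R^d)$, while $0\le|f|-(|f|_\eps-\eps)\le|f|$ gives $|f|_\eps-\eps\to|f|$ in $L^2$ by dominated convergence. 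On $\{f\ne0\}$ the gradients converge pointwise to $\big(\sum_j f_j\nabla f_j\big)/|f|$, whereas on $\{f=0\}$ one has $\nabla|f|_\eps=0$ for every $\eps$; hence, again by dominated convergence, $\nabla|f|_\eps\to\frac{\sum_j f_j\nabla f_j}{|f|}\chi_{\{f\ne0\}}$ in $L^2$. Closedness of the distributional gradient then yields $|f|\in H^1(\R^d)$ together with \eqref{gradient of |.|}. (Equivalently one passes to the limit in the integration-by-parts identity tested against $C_c^\infty(\R^d)$, where compact support makes the $L^\infty$ bounds sufficient.)

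For part b) I would first observe that the function in question is a scalar multiple of $f$: setting $c:=\frac{1\wedge|f|}{|f|}\chi_{\{f\ne0\}}\in[0,1]$ one has $(1\wedge|f|)\sg(f)=c\,f$. Consequently $\big|(1\wedge|f|)\sg(f)\big|=1\wedge|f|\le|f|$, and since $V(x)\ge0$,
\[\int_{\R^d}\big\langle V(1\wedge|f|)\sg(f),(1\wedge|f|)\sg(f)\big\rangle\,dx=\int_{\R^d}c^2\langle Vf,f\rangle\,dx\le\int_{\R^d}\langle Vf,f\rangle\,dx<\infty,\]
so once membership in $H^1$ is known the inclusion in $D(a)$ is automatic. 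Membership follows because the map $P:\R^m\to\R^m$, $P(y)=(1\wedge|y|)\tfrac{y}{|y|}$ for $y\ne0$ and $P(0)=0$, is the $1$-Lipschitz projection onto the closed unit ball with $P(0)=0$, so $P\circ f=(1\wedge|f|)\sg(f)\in H^1(\R^d,\R^m)$.

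To identify the gradient I would split $\R^d$ according to the value of $|f|$, using that two $H^1$ functions have equal gradients a.e.\ on any set where they coincide and the level-set fact that $\nabla|f|=0$ a.e.\ on $\{|f|=c\}$. On $\{|f|\le1\}$ one has $P(f)=f$, hence $\nabla(P\circ f)_j=\nabla f_j$ a.e.; on $\{|f|>1\}$, where the denominator is bounded below by $1$, the quotient rule gives $\nabla(f_j/|f|)=|f|^{-1}\big(\nabla f_j-\tfrac{f_j}{|f|}\nabla|f|\big)$. One then checks that the displayed formula reduces exactly to these expressions in each region, using $\frac{1+\sg(1-|f|)}{2}=\chi_{\{|f|<1\}}$ a.e.\ (the two differ only on $\{|f|=1\}$, where $\nabla|f|=0$) and $\nabla f_j=0$ a.e.\ on $\{f_j=0\}$ to treat $\{f=0\}$. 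The step I expect to be most delicate is the limit in a) on the zero set $\{f=0\}$ and the attendant vanishing of $\nabla|f|$ there; once \eqref{gradient of |.|} is available, the only remaining subtlety in b) is the non-differentiability of $P$ across the sphere $\{|y|=1\}$, which is harmless precisely because $\nabla|f|=0$ a.e.\ on $\{|f|=1\}$ annihilates the ambiguous radial contribution.
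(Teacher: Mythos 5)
Your argument is correct, and while part a) coincides with the paper's proof (the same regularisation $\sqrt{|f|^2+\eps^2}-\eps$, the same Cauchy--Schwarz domination and passage to the limit), part b) takes a genuinely different route. The paper stays entirely elementary: it writes $1\wedge|f|=\tfrac12(1+|f|-|1-|f||)$, regularises the denominator as $|f|+\eps$, computes $\nabla(P_\eps f)_j$ by the product rule for $H^1_{loc}\cap L^\infty$ functions, dominates $|\nabla(P_\eps f)_j|$ by $|\nabla f_j|+2|\nabla|f||$, and passes to the limit; this yields $H^1$ membership and the gradient formula in one stroke, at the price of a longer computation. You instead observe that $P$ is the (nonexpansive) projection onto the closed unit ball with $P(0)=0$, invoke the Lipschitz-composition theorem for vector-valued outer functions (Marcus--Mizel) to get $P\circ f\in H^1(\R^d,\R^m)$, and then recover the formula by locality of the weak gradient on the sets $\{|f|\le 1\}$ and $\{|f|>1\}$, using $\nabla|f|=0$ a.e.\ on $\{|f|=1\}$ to absorb the non-differentiability of $P$ across the unit sphere. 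This is cleaner and shorter but leans on two nontrivial imported facts (Lipschitz composition and the level-set/locality properties of Sobolev gradients) that the paper's self-contained computation avoids. One small presentational point: on $\{|f|>1\}$ the comparison function $f/|f|$ is not globally in $H^1$, so to apply the locality principle you should realise it globally, e.g.\ as $f/(1\vee|f|)$ (which is again $Pf$, so the quotient rule applied to the bounded-below denominator $1\vee|f|\in H^1_{loc}$ is the cleanest way to phrase that step); the $V$-integrability estimate via $c=\tfrac{1\wedge|f|}{|f|}\chi_{\{f\ne 0\}}\le 1$ is identical to the paper's.
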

\begin{proof}
a) Let $f\in H^1(\R^d,\R^m)$. Define, for $\varepsilon>0$, $f_\varepsilon=\left(\displaystyle\sum_{j=1}^{m}f_j^2+\varepsilon^2\right)^{\frac{1}{2}}-\varepsilon$. One has 
\[0\le f_\varepsilon=\dfrac{|f|^2}{\left(\displaystyle\sum_{j=1}^{m}f_j^2+\varepsilon^2\right)^{\frac{1}{2}}+\varepsilon}\le |f|. \]
Hence, by dominated convergence theorem $f_\varepsilon\underset{\varepsilon\rightarrow 0}{\longrightarrow} |f|$ in $L^2(\R^d)$. On the other hand, $f_\varepsilon\in H^1_{loc}(\R^d)$ and
\[\nabla f_\varepsilon=\dfrac{\displaystyle\sum_{j=1}^{m}f_j\nabla f_j}{\left(\sum_{j=1}^{m}f_j^2+\varepsilon\right)^{\frac{1}{2}}}\underset{\varepsilon\rightarrow 0}{\longrightarrow}\dfrac{\displaystyle\sum_{j=1}^{m}f_j\nabla f_j}{|f|}\chi_{\{f\neq 0\}}. \]
%$\lim_{\varepsilon\rightarrow 0}\nabla f_\varepsilon=\dfrac{\displaystyle\sum_{j=1}^{m}f_j\nabla f_j}{|f|}\chi_{\{f\neq 0\}}$
Again, the dominated convergence theorem yields $|f|\in H^1(\R^d)$ and \eqref{gradient of |.|}.\\ %Moreover, $|\nabla|f||\le (\displaystyle\sum_{j=1}^{m}|\nabla f_j|^2)^{\frac{1}{2}}$.\\ 
b) Let $f\in D(a)$ i.e., $f\in H^1(\R^d,\R^m)$ and $V^{1/2}f\in L^2(\R^d,\R^m)$. One has
\begin{align*}
\int_{\R^d}\langle V(x)(1\wedge |f|)\sg(f),(1\wedge |f|)\sg(f)\rangle dx&\le\int_{\{f\neq 0\}}\left(\dfrac{1\wedge |f|}{|f|}\right)^2\langle V(x)f(x),f(x)\rangle dx\\
&\le \int_{\R^d}\langle V(x)f(x),f(x)\rangle dx<\infty.
\end{align*}
Now remains to show that $(1\wedge |f|)\sg(f)\in H^1(\R^d,\R^m)$. Set
$$Pf:=(1\wedge |f|)\sg(f)=(1\wedge |f|)\frac{f}{|f|}\chi_{\{f\neq 0\}},$$
 and 
 \[P_\varepsilon f:=(1\wedge |f|)\frac{f}{|f|+\varepsilon}=\dfrac{1+|f|-|1-|f||}{2}\frac{f}{|f|+\varepsilon},\]
for $\varepsilon>0$.
One has
\[
\begin{cases}
|P_\varepsilon f|\le (1\wedge |f|)\le |f|\\
P_\varepsilon f\underset{\varepsilon\rightarrow 0}{\longrightarrow} Pf\qquad a.e.
\end{cases},
\]
which implies that $P_\varepsilon f \to Pf$ in $L^2(\R^d,\R^m)$ as $\eps\to 0$.

On the other hand, $P_\varepsilon f\in H^1_{loc}(\R^d,\R^m)$ and
\begin{align*}
\nabla (P_\varepsilon f)_j&=\nabla \left(\dfrac{1+|f|-|1-|f||}{2}\frac{f_j}{|f|+\varepsilon}\right)\\
&= \dfrac{1+|f|-|1-|f||}{2}\left(\frac{\nabla f_j}{|f|+\varepsilon}-\frac{f_j}{(|f|+\varepsilon)^2}\nabla|f|\right)\\
&+\frac{1}{2}\dfrac{f_j}{|f|+\varepsilon}\left(\nabla|f|+\sg(1-|f|)\nabla|f|\right)\\
&=\dfrac{1\wedge |f|}{|f|+\varepsilon}\left(\nabla f_j-\frac{f_j}{|f|+\varepsilon}\nabla|f|\right)\\
&+ \frac{1}{2}\dfrac{f_j}{|f|+\varepsilon}\left(1+\sg(1-|f|)\right) \nabla|f|.
\end{align*}
Hence,
\begin{align*}
\lim_{\varepsilon\rightarrow 0}\nabla (P_\varepsilon f)_j&=
\dfrac{1\wedge |f|}{|f|}\left(\nabla f_j-\frac{f_j}{|f|}\nabla|f|\right)\chi_{\{f\neq 0\}}\\
&+ \frac{1}{2}\dfrac{f_j}{|f|}\left(1+\sg(1-|f|)\right)\chi_{\{f\neq 0\}} \nabla|f|,
\end{align*}
and
\[ |\nabla (P_\varepsilon f)_j|\le \dfrac{1\wedge |f|}{|f|}(|\nabla f_j|+|\nabla|f||)+|\nabla|f||\le |\nabla f_j|+2|\nabla|f||\;\in L^2(\R^d).\]
 By the dominated convergene theorem we conclude that $Pf=\displaystyle\lim_{\varepsilon\to 0}P_\eps f\in H^1(\R^d,\R^m)$, and
\begin{align*}
\nabla (P f)_j&\coloneqq\lim_{\varepsilon\rightarrow 0}\nabla (P_\varepsilon f)_j\\
&=\dfrac{1\wedge |f|}{|f|}\left(\nabla f_j-\frac{f_j}{|f|}\nabla|f|\right)\chi_{\{f\neq 0\}}
+ \frac{1}{2}\dfrac{f_j}{|f|}\left(1+\sg(1-|f|)\right)\chi_{\{f\neq 0\}} \nabla|f|.
\end{align*}
\end{proof}
In the following we state another lemma where we prove (ii) of Proposition \ref{Beurling-Deny condition}  
\begin{lm}
Let $f\in D(a)$. Then
\begin{equation}
a((1\wedge |f|)\sg(f))\le a(f).
\end{equation}
\end{lm}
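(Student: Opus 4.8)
The plan is to split the quadratic form as $a(g)=a_Q(g)+a_V(g)$, where $a_Q(g):=\int_{\R^d}\sum_{j=1}^m|\nabla g_j|_{Q(x)}^2\,dx$ and $a_V(g):=\int_{\R^d}\langle V(x)g(x),g(x)\rangle\,dx$, for real $g\in D(a)$. The potential part costs nothing: the opening estimate in the proof of Lemma~\ref{H^1 stable under |.| and P8}(b) is exactly $a_V(Pf)\le a_V(f)$, where $Pf:=(1\wedge|f|)\sg(f)$. So it remains to prove $a_Q(Pf)\le a_Q(f)$, and I would obtain this from the pointwise inequality $\sum_{j=1}^m|\nabla(Pf)_j|_{Q(x)}^2\le\sum_{j=1}^m|\nabla f_j|_{Q(x)}^2$ for a.e.\ $x$, integrated over $\R^d$.

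For the pointwise bound I would insert the explicit expression for $\nabla(Pf)_j$ from Lemma~\ref{H^1 stable under |.| and P8}(b) and split $\R^d$, modulo a null set, according to the value of $|f|$. On $\{0<|f|<1\}$ the coefficient $\tfrac{1+\sg(1-|f|)}{2}$ equals $1$ and $1\wedge|f|=|f|$, so the formula collapses to $\nabla(Pf)_j=\nabla f_j$; on $\{f=0\}$ both gradients vanish a.e.; and on the level set $\{|f|=1\}$ one uses the standard fact that $\nabla|f|=0$ a.e.\ there, which again forces $\nabla(Pf)_j=\nabla f_j$ a.e. Hence the two sides coincide off $\{|f|>1\}$, and everything reduces to that set.

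On $\{|f|>1\}$ the coefficient $\tfrac{1+\sg(1-|f|)}{2}$ vanishes and $1\wedge|f|=1$, leaving $\nabla(Pf)_j=\tfrac{1}{|f|}\big(\nabla f_j-\tfrac{f_j}{|f|}\nabla|f|\big)$. Setting $a_j:=f_j/|f|$ (so $\sum_j a_j^2=1$) and $r:=\sum_k a_k\nabla f_k$, which by \eqref{gradient of |.|} is just $\nabla|f|$, one has $\tfrac{f_j}{|f|}\nabla|f|=a_j r$, whence $\nabla(Pf)_j=\tfrac{1}{|f|}(\nabla f_j-a_j r)$. Expanding $\sum_j|\nabla f_j-a_j r|_{Q(x)}^2$ and using $\sum_j a_j\nabla f_j=r$ together with $\sum_j a_j^2=1$ gives
\[\sum_{j=1}^m|\nabla(Pf)_j|_{Q(x)}^2=\frac{1}{|f|^2}\Big(\sum_{j=1}^m|\nabla f_j|_{Q(x)}^2-|r|_{Q(x)}^2\Big).\]
Since $|f|>1$ we have $1/|f|^2\le1$, while Cauchy--Schwarz (with $\sum_j a_j^2=1$) gives $0\le|r|_{Q(x)}^2\le\sum_j|\nabla f_j|_{Q(x)}^2$; the right-hand side is therefore at most $\sum_j|\nabla f_j|_{Q(x)}^2$, as required.

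I expect the only genuinely delicate point to be the measure-theoretic bookkeeping on $\{|f|=1\}$ and $\{f=0\}$, where the $\sg$-terms are ambiguous; these are dispatched by the a.e.\ vanishing of $\nabla|f|$ on the level set $\{|f|=1\}$ and of $\nabla f_j$ on $\{f_j=0\}$. The remaining algebra on $\{|f|>1\}$ is routine once the radial splitting $\nabla f_j=a_j r+(\nabla f_j-a_j r)$ is made. Conceptually the estimate reflects that $Pf=\Phi\circ f$, where $\Phi$ is the nearest-point projection onto the closed unit ball of $\R^m$, a map whose Jacobian satisfies $(D\Phi)^{\!\top}D\Phi\le I$; this is precisely what keeps the $Q(x)$-weighted Dirichlet density of $Pf$ no larger than that of $f$. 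Integrating the pointwise bound and adding $a_V(Pf)\le a_V(f)$ yields $a((1\wedge|f|)\sg(f))\le a(f)$.
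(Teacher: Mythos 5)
Your proof is correct and follows essentially the same route as the paper: both insert the gradient formula from Lemma~\ref{H^1 stable under |.| and P8}(b), reduce matters to the pointwise bound $\sum_{j}|\nabla(Pf)_j|_{Q}^2\le\sum_{j}|\nabla f_j|_{Q}^2$ via the identity $\sum_j f_j\nabla f_j=|f|\,\nabla|f|$ together with Cauchy--Schwarz, and reuse the potential estimate already established in that lemma. The only cosmetic difference is that you partition $\R^d$ into $\{|f|<1\}$, $\{|f|=1\}$, $\{|f|>1\}$ at the outset, whereas the paper performs a single algebraic expansion on $\{f\neq0\}$ and only afterwards checks case-by-case that the coefficient of $|\nabla|f||_Q^2$ is non-positive.
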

\begin{proof}
Let $f\in D(a)$. One has,
\begin{align*}
\alpha_f&:=\langle Q\nabla((1\wedge |f|)sg(f)),\nabla((1\wedge |f|)\sg(f))\rangle\\
&=\sum_{j=1}^{m}\left|\dfrac{1\wedge |f|}{|f|}\left(\nabla f_j-\frac{f_j}{|f|}\nabla|f|\right)\chi_{\{f\neq 0\}}
+ \frac{1}{2}\dfrac{f_j}{|f|}(1+\sg(1-|f|))\chi_{\{f\neq 0\}} \nabla|f|\right|_{Q}^2\\
&=\dfrac{(1+\sg(1-|f|))^2}{4}\chi_{\{f\neq 0\}}|\nabla|f||_Q^2+\frac{(1\wedge|f|)^2}{|f|^2}\chi_{\{f\neq 0\}}\sum_{j=1}^{m}|\nabla f_j-\frac{f_j}{|f|}\nabla|f||_Q^2\\
&+ (1+\sg(1-|f|))\frac{1\wedge|f|}{|f|}\chi_{\{f\neq 0\}}\sum_{j=1}^{m}\langle Q\nabla|f|,(\nabla f_j-\frac{f_j}{|f|}\nabla |f|)\rangle f_j\\
&=\dfrac{(1+\sg(1-|f|))^2}{4}\chi_{\{f\neq 0\}}|\nabla|f||_Q^2\\
&+\frac{(1\wedge|f|)^2}{|f|^2}\chi_{\{f\neq 0\}}\left(\sum_{j=1}^{m}\langle Q\nabla f_j,\nabla f_j\rangle+|\nabla|f||_Q^2-\langle \nabla|f|^2,\frac{\nabla|f|}{|f|}\rangle_Q\right)\\
&+ (1+\sg(1-|f|))\frac{1\wedge|f|}{|f|}\chi_{\{f\neq 0\}}\underset{0}{\underbrace{\left(\frac{1}{2}\langle Q\nabla|f|,\nabla|f|^2\rangle-|f|\langle Q\nabla|f|,\nabla|f|\rangle\right)}}\\
&=\dfrac{(1+\sg(1-|f|))^2}{4}\chi_{\{f\neq 0\}}|\nabla|f||_Q^2+\frac{(1\wedge|f|)^2}{|f|^2}\chi_{\{f\neq 0\}}\left(\sum_{j=1}^{m}\langle Q\nabla f_j,\nabla f_j\rangle-|\nabla|f||_Q^2\right)\\
&= \left(\dfrac{(1+\sg(1-|f|))^2}{4}-\frac{(1\wedge|f|)^2}{|f|^2}\right)\chi_{\{f\neq 0\}}|\nabla|f||_Q^2+\frac{(1\wedge|f|)^2}{|f|^2}\chi_{\{f\neq 0\}}\sum_{j=1}^{m}\langle Q\nabla f_j,\nabla f_j\rangle.
\end{align*}
Discussing the cases $|f|<1$, $|f|=1$ and $|f|>1$, one can easily see that $$\dfrac{(1+\sg(1-|f|))^2}{4}-\frac{(1\wedge|f|)^2}{|f|^2}\le 0.$$ 
Thus,
\begin{align*}
\alpha_f %=\langle q\nabla((1\wedge |f|)sg(f)),\nabla((1\wedge |f|)sg(f))\rangle 
&\le \frac{(1\wedge|f|)^2}{|f|^2}\chi_{\{f\neq 0\}}\sum_{j=1}^{m}\langle Q\nabla f_j,\nabla f_j\rangle\\
&\le \sum_{j=1}^{m}\langle Q\nabla f_j,\nabla f_j\rangle.
\end{align*}
Integrating over $\R^d$, one gets
\begin{align*}
a_0((1\wedge |f|)\sg(f))&\coloneqq\int_{\R^d}\langle Q\nabla((1\wedge |f|)\sg(f)),\nabla((1\wedge |f|)\sg(f))\rangle dx\\
&\le \int_{\R^d}\sum_{j=1}^{m}\langle Q\nabla f_j,\nabla f_j\rangle dx\coloneqq a_0(f).
\end{align*}
Therefore,
\begin{align*}
a((1\wedge |f|)\sg(f))&= a_0((1\wedge |f|)\sg(f))+\int_{\R^d}\langle V(x)(1\wedge |f|)\sg(f),(1\wedge |f|)\sg(f)\rangle dx\\
&\le a_0(f)+\int_{\R^d}\langle Vf,f\rangle dx= a(f).
\end{align*}
\end{proof}
As a consequence of the statement of Proposition \ref{Beurling-Deny condition}, one gets
\begin{coro}\label{coro semgpe L8-contra}
The semigroup $\{T(t)\}_{t\ge 0}$ is $L^\infty$-contractive.
\end{coro}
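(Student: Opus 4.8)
The plan is to obtain the corollary as an immediate consequence of the two preceding lemmas together with Ouhabaz's criterion \cite[Theorem 2]{Ouhabaz 99}. The geometric content of \eqref{L^8 contractivity} is that the closed unit ball
\[ C=\{g\in L^2(\R^d,\R^m) : |g(x)|\le 1 \text{ for a.e. } x\in\R^d\} \]
of $L^\infty(\R^d,\R^m)$ is left invariant by the semigroup $\{T(t)\}_{t\ge 0}$; indeed, by linearity and homogeneity of $T(t)$, the inclusion $T(t)C\subseteq C$ for every $t\ge 0$ is equivalent to \eqref{L^8 contractivity} (for $f\neq 0$ one applies the invariance to $f/\|f\|_\infty\in C$). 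First I would identify the $L^2$-projection $P$ onto $C$: since the constraint defining $C$ is the pointwise Euclidean bound $|g|\le 1$, the nearest-point map acts by radial truncation, namely $Pf=(1\wedge|f|)\sg(f)$, which is exactly the operator featuring in Proposition \ref{Beurling-Deny condition}.

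Next I would invoke the criterion. By Proposition \ref{properties of the form} the form $a$ is densely defined, accretive, continuous and closed, and $-A$ is the operator associated to $a$ (Corollary \ref{coro gen sem in L2}), so the structural hypotheses of \cite[Theorem 2]{Ouhabaz 99} are met. That theorem characterizes the invariance of $C$ under $\{T(t)\}_{t\ge 0}$, equivalently \eqref{L^8 contractivity}, through the two form conditions $P(D(a))\subseteq D(a)$ and $a(Pf)\le a(f)$ for all $f\in D(a)$. These are precisely assertions (i) and (ii) of Proposition \ref{Beurling-Deny condition}, established in the two lemmas preceding this corollary. Applying the criterion then yields \eqref{L^8 contractivity}, that is, $\{T(t)\}_{t\ge 0}$ is $L^\infty$-contractive.

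No genuine obstacle remains at this stage: the substantive analysis — showing that the truncation $Pf=(1\wedge|f|)\sg(f)$ keeps $f$ in $D(a)$ and does not increase the energy, in particular controlling the diffusion part $a_0$ under $P$ by means of the ellipticity of $Q$ — has already been carried out in the two lemmas. The only points that require care here are the correct identification of the projection $P$ with $(1\wedge|f|)\sg(f)$ and the verification that $a$ satisfies the hypotheses of \cite[Theorem 2]{Ouhabaz 99}; both are routine given Proposition \ref{properties of the form}. Thus the proof of the corollary is purely a matter of assembling these ingredients and citing the criterion.
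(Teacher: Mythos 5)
Your proposal is correct and follows exactly the route the paper intends: the paper states the corollary as an immediate consequence of Proposition \ref{Beurling-Deny condition} via Ouhabaz's criterion \cite[Theorem 2]{Ouhabaz 99}, with the two lemmas supplying conditions (i) and (ii). Your additional remarks — identifying the invariant convex set as the $L^\infty$-unit ball, its $L^2$-projection as the radial truncation $(1\wedge|f|)\sg(f)$, and checking that $a$ is densely defined, accretive, continuous and closed — merely make explicit what the paper leaves implicit, so there is no substantive difference.
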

Now, we are able to state our main theorem of this section
\begin{thm}
Let $1<p<\infty$ and assume Hypotheses \ref{Hypo}. Then, $\mathcal{A}$ admits a realization $A_p$ in $L^p(\R^d,\C^m)$ that generates a bounded strongly continuous semigroup $(T_p(t))_{t\ge 0}$. Moreover, $(T_p(t))_{t\ge 0}$ is analytic in the sector of angle $\frac{\pi}{p}$ if $p>2$; it is analytic in the sector of angle $\frac{\pi}{p'}$ if $1<p<2$ where $1/p + 1/p' =1$.
\end{thm}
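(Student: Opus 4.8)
The plan is to build the $L^p$--semigroups from the $L^2$--semigroup of Corollary \ref{coro gen sem in L2} together with the $L^\infty$--contractivity of Corollary \ref{coro semgpe L8-contra}, and then to extract the precise angles of analyticity by a Stein interpolation argument.

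First I would record the real--time contractivity on every $L^p$. Since $A$ is self--adjoint and $-A$ is accretive, $T(t)$ is an $L^2$--contraction; Corollary \ref{coro semgpe L8-contra} gives $\|T(t)f\|_\infty\le\|f\|_\infty$, and, by self--adjointness, duality yields $L^1$--contractivity of the relevant restriction of $T(t)$. The Riesz--Thorin theorem then provides, for each $t\ge 0$, a contraction $T_p(t)$ on $L^p(\R^d,\C^m)$ extending $T(t)_{|L^2\cap L^p}$ for $p\in[2,\infty)$. These operators inherit the semigroup law from the density of $L^2\cap L^p$ in $L^p$ and the consistency of the extensions, and strong continuity on $L^p$ follows from strong continuity on the dense subspace $L^2\cap L^p$ together with the uniform bound $\|T_p(t)\|\le 1$. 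Thus $(T_p(t))_{t\ge0}$ is a bounded $C_0$--semigroup whose generator $A_p$ is, by construction, the part of $\mathcal A$ in $L^p$.

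The core of the theorem is the angle of analyticity, which I would obtain by complex interpolation of an analytic family of operators (Stein's theorem). The key input is that, $A$ being self--adjoint with $A\le 0$, the spectral theorem gives $\|T(z)\|_{2\to2}\le 1$ for every $z$ in the open right half--plane $\{\Re z>0\}$, i.e. on the full sector of half--angle $\pi/2$, while on the positive real axis $T(t)$ is an $L^1$-- and $L^\infty$--contraction. Fix $p>2$ and $\alpha=1-2/p$, so that $L^p=[L^2,L^\infty]_\alpha$. For a target $z_0=|z_0|e^{i\theta_0}$ with $|\theta_0|<\pi/p$, I would consider, on the strip $\{0\le\Re s\le1\}$, the analytic family $s\mapsto T(z(s))$ with $z(s)=|z_0|\exp\!\big(i\theta_0\tfrac{1-s}{1-\alpha}\big)$, acting on simple functions and regularised by a Gaussian factor $e^{\gamma(s^2-\alpha^2)}$ to meet the admissibility hypotheses. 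On $\Re s=1$ the time $z(s)$ is real positive, so the family is $L^\infty$--bounded by $1$; on $\Re s=0$ the argument of $z(s)$ equals $\theta_0\,p/2<\pi/2$, so $z(s)$ stays in the right half--plane and the family is $L^2$--bounded by $1$. Stein interpolation at $s=\alpha$, where $z(\alpha)=z_0$, then yields $\|T(z_0)\|_{p\to p}\le 1$, and the admissibility constraint $\theta_0\,p/2<\pi/2$ is exactly $|\theta_0|<\pi/p$. Hence $T(\cdot)$ extends to a bounded family on each subsector of $\Sigma_{\pi/p}$; since it coincides on the dense set $L^2\cap L^p$ with the $L^2$--analytic semigroup and is locally bounded, $z\mapsto T_p(z)$ is analytic and satisfies the semigroup law on $\Sigma_{\pi/p}$, proving analyticity of angle $\pi/p$ for $p>2$.

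Finally, for $1<p<2$ I would argue by duality: as $T(t)$ is self--adjoint, one has $T_p(t)=\big(T_{p'}(t)\big)^*$ under the $L^p$--$L^{p'}$ pairing, with $p'>2$; the analyticity of $(T_{p'}(z))$ of angle $\pi/p'$ established above transfers to $(T_p(z))$ with the same angle $\pi/p'$, since the dual of an analytic semigroup is analytic of equal angle. The main obstacle is the rigorous execution of the Stein interpolation in the third step: one must set up the analytic family on an appropriate dense class, verify the growth/admissibility conditions in the strip (whence the Gaussian regularisation), and carry out the geometric bookkeeping that pins the maximal aperture at $\pi/p$ through the requirement that the rotated time $z(s)$ remain in the right half--plane at $\Re s=0$.
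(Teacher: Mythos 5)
Your proposal is correct and follows the same overall skeleton as the paper (self-adjoint $L^2$-contraction semigroup plus $L^\infty$-contractivity, Riesz--Thorin extension to $L^p$ for $p>2$, duality for $1<p<2$), but it diverges at the decisive step: the paper obtains the angle $\pi/p$ simply by invoking the angle concavity theorem of \cite[Theorem 2.9]{fggor10}, whereas you reprove that result from scratch via Stein interpolation of the analytic family $s\mapsto T(z(s))$ between the $L^2$-bound on the open right half-plane (spectral theorem) and the $L^\infty$-bound on the positive real axis. Your bookkeeping is right: with $\alpha=1-2/p$ the constraint that $z(s)$ stay in the right half-plane on $\Re s=0$ is exactly $|\theta_0|p/2<\pi/2$, i.e.\ $|\theta_0|<\pi/p$, and $z(\alpha)=z_0$. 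What the paper's route buys is brevity and the avoidance of the technicalities you correctly anticipate (admissibility in the strip, the $L^\infty$ endpoint in complex interpolation, regularisation); what your route buys is a self-contained proof that makes visible where the angle $\pi/p$ actually comes from. Two small points you should tighten: (i) strong continuity of $(T_p(t))$ does not follow merely from ``strong continuity on the dense subspace $L^2\cap L^p$'' --- you must first show $\|T(t)f-f\|_p\to 0$ for $f$ in that subspace, which is where the paper inserts the interpolation inequality $\|T(t)f-f\|_p\le\|T(t)f-f\|_2^{\theta}\,\|T(t)f-f\|_\infty^{1-\theta}$ with $\theta=2/p$ (alternatively, weak continuity plus contractivity suffices); (ii) for the duality step it is worth saying explicitly that $L^p$ is reflexive for $1<p<2$, so the adjoint semigroup is again strongly continuous and analytic with the same angle $\pi/p'$.
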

\begin{proof}
Let $2<p<\infty$. According to Corollary \ref{coro gen sem in L2} and Corollary \ref{coro semgpe L8-contra} $\{T(t)\}_{t\ge 0}$ is self-adjoint and $L^\infty$--contactive. Hence, by the Riesz-Thorin interpolation theorem, $\{T(t)\}_{t\ge 0}$ admits a unique bounded extension $\{T_p(t)\}_{t\ge 0}$ to $L^p(\R^d,\C^m)$; this extension is analytic in the sector of angle $\frac{\pi}{p}$, cf. \cite[Theorem 2.9]{fggor10}. Moreover, for every $f\in L^2(\R^d,\R^m)\cap L^\infty(\R^d,\R^m)$,
\[ \|T(t)f-f\|_p\le \|T(t)f-f\|_2^\theta\|T(t)f-f\|_\infty^{1-\theta}\le 2^{1-\theta}\|f\|_\infty^{1-\theta}\|T(t)f-f\|_2^\theta, \]
where $\theta=\frac{2}{p}$. This shows how $\{T_p(t)\}_{t\ge 0}$ is strongly continuous.\\
Concerning the case $1<p<2$, we prove by duality that $\|T(t)f\|_1\le \|f\|_1$, for every $t>0$, and similarly, we obtain an analytic extrapolation of $\{T(t)\}_{t\ge 0}$ which is strongly continuous.
\end{proof}
\begin{rmk}
We can extrapolate the semigroup $\{T(t)\}_{t\ge 0}$ to a strongly continuous one in $L^1(\R^d,\R^m)$. It is an easier consequence of consistency and $L^p$-contractivity of $\{T(t)\}_{t\ge 0}$, see \cite{v92}.
\end{rmk}
\section{Further properties of the semigroup}
In this section we study positivity and compactness of $\{T(t)\}_{t\ge 0}$ and the spectrum of $A$. We start by positivity
\subsection{Positivity}
In this subsection we give necessary and sufficient condition for positivity of the semigroup $\{T(t)\}_{t\ge 0}$. We use the form characterization of the invariance of convex subsets via semigroups. For this purpose we introduce
$$C^+\coloneqq\{f=(f_1,\dots,f_m)\in L^2(\R^d,\R^m): f\ge 0\qquad a.e.\}.$$ $C^+$ is a closed convex subset of $L^2(\R^d,\R^m)$. The projection $P_+$ on $C^+$ is given by
\[P_+ f\coloneqq f^+=(f_j\wedge 0)_{1\le j\le m},\qquad\forall f\in L^2(\R^d,\R^m).\]
One knows that the projection on a closed convex subsets of a Banach space is uniquely defined and it is easy to check that $P_+$, defined above, is the right one for $C^+$.
We recall that $\{T(t)\}_{t\ge 0}$ is positive if, and only if, $f\in L^2(\R^d,\R^m)$ and $f\ge 0$ imply $T(t)f\ge 0$, for every $t>0$.
 The form characterization of positivity is given by \cite[Theorem 3 (iii)]{Ouhabaz 99} as follow
\begin{prop}\label{caracterisation of positivity by forms}
$(T(t))_{t\ge 0}$ is a positive semigroup if, and only if, $f^+\in D(a)$ for all $f\in D(a)$ and $a(f^+,f^-)\le 0$, where $f^-=f-P_+ f=((-f_j)\wedge 0)_{1\le j\le m}$.
\end{prop}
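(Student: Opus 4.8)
The plan is to deduce the statement from E.~M.~Ouhabaz's characterization of the invariance of a closed convex set under a semigroup associated with a form, \cite[Theorem 3]{Ouhabaz 99}. In the real Hilbert space $L^2(\R^d,\R^m)$ this criterion asserts that a closed convex set $C$, with $L^2$-orthogonal projection $P_C$, is invariant under $(T(t))_{t\ge 0}$ if and only if $P_C(D(a))\subseteq D(a)$ and $a(P_C f, f - P_C f)\ge 0$ for all $f\in D(a)$. I would apply it to $C=C^+$, after recalling that, by the definition given in the text, positivity of $(T(t))_{t\ge 0}$ is exactly the invariance of the cone $C^+$.

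First I would justify the two preliminary facts recorded before the statement: that $C^+$ is closed and convex, and that its metric projection is the componentwise positive part $P_+ f = f^+$. Convexity and closedness are immediate from the definition of $C^+$. To identify the projection, I would verify that for $f\in L^2(\R^d,\R^m)$ the element $f^+$ belongs to $C^+$ and satisfies the variational characterization of the projection, namely $\langle f - f^+, g - f^+\rangle_{L^2}\le 0$ for every $g\in C^+$. This follows from the pointwise decomposition $f = f^+ - f^-$ with $f^\pm\ge 0$ and $\langle f^+, f^-\rangle = 0$ almost everywhere, which is precisely the content of the "easy to check" assertion in the text.

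Next I would carry out the translation of Ouhabaz's condition into the form stated in the proposition. With $P_+ f = f^+$ and $f = f^+ - f^-$ one has $f - P_+ f = -f^-$, so that $a(P_+ f, f - P_+ f) = -a(f^+, f^-)$; hence the invariance inequality $a(P_+ f, f - P_+ f)\ge 0$ is precisely $a(f^+, f^-)\le 0$. Likewise the domain requirement $P_+(D(a))\subseteq D(a)$ reads $f^+\in D(a)$ for all $f\in D(a)$. Combining these with the identification of positivity as invariance of $C^+$ yields both implications of the proposition.

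I do not expect a substantial obstacle here, since the statement is essentially \cite[Theorem 3 (iii)]{Ouhabaz 99} read off for the positive cone; the only point requiring attention is the bookkeeping that makes the abstract inequality match the stated one, namely the correct identification of $P_+$ as the $L^2$-projection onto $C^+$ together with the sign relation $f - P_+ f = -f^-$. No property of $Q$ or $V$ is used at this stage: Hypotheses \ref{Hypo} enter only later, when the concrete inequality $a(f^+, f^-)\le 0$ is to be checked for the operator $\mathcal{A}$.
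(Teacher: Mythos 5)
Your proposal is correct and follows essentially the same route as the paper, which states this proposition as a direct consequence of \cite[Theorem 3]{Ouhabaz 99} applied to the cone $C^+$ without writing out the details. Your sign bookkeeping ($f-P_+f=-f^-$ with $f^-\ge 0$, so $a(P_+f,f-P_+f)\ge 0$ becomes $a(f^+,f^-)\le 0$) is the right reading and in fact tidies up the paper's own slightly inconsistent conventions for $f^+$ and $f^-$.
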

%Now, we state the result of positivity
 By application of \cite[Theorem 3 (iii)]{Ouhabaz 99} we get the following characterisation of positivity of $\{T(t)\}_{t\ge 0}$ in term of entries of the potential matrix $V$
\begin{thm}\label{carac. of positivity by entries of V}
The semigroup $(T(t))_{t\ge 0}$ is positive if and only if $v_{ij}\le 0$, for all $i\ne j\in\{1,\dots,m\}$.
\end{thm}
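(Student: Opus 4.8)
The plan is to apply the form characterization of positivity given in Proposition \ref{caracterisation of positivity by forms}: I must verify that $f^+ \in D(a)$ whenever $f \in D(a)$, and then compute $a(f^+, f^-)$ and relate its sign to the off-diagonal entries $v_{ij}$. The key observation is that $f^+$ and $f^-$ have disjoint supports componentwise, which will make the diffusion part of the form vanish, so that $a(f^+,f^-)$ reduces entirely to the potential term. This reduction is what ties the condition to the entries of $V$.

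First I would check the domain stability: for $f \in D(a) \subset H^1(\R^d,\R^m)$, each component satisfies $f_j^+ \in H^1(\R^d)$ (a standard scalar fact, with $\nabla f_j^+ = \chi_{\{f_j > 0\}}\nabla f_j$), so $f^+ \in H^1(\R^d,\R^m)$; and since $|f^+| \le |f|$ pointwise together with the positive-definiteness \eqref{accretivity of V} gives $\int_{\R^d}\langle V f^+, f^+\rangle\,dx \le \int_{\R^d}\langle V f, f\rangle\,dx < \infty$, we get $f^+ \in D(a)$. Next I would compute the diffusion part $a_0(f^+, f^-) = \int_{\R^d}\sum_{j=1}^m \langle Q\nabla f_j^+, \nabla f_j^-\rangle\,dx$. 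Since for each fixed $j$ the gradients $\nabla f_j^+ = \chi_{\{f_j>0\}}\nabla f_j$ and $\nabla f_j^- = -\chi_{\{f_j<0\}}\nabla f_j$ are supported on disjoint sets, each summand vanishes, hence $a_0(f^+,f^-)=0$. Therefore
\[
a(f^+,f^-) = \int_{\R^d}\langle V(x) f^+(x), f^-(x)\rangle\,dx = \int_{\R^d}\sum_{i,j=1}^m v_{ij}(x)\,f_i^+(x)\,f_j^-(x)\,dx.
\]

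Now I would analyze the integrand. Since $f_i^+ \ge 0$ and $f_j^- \ge 0$ pointwise, and the diagonal terms contribute $\sum_j v_{jj} f_j^+ f_j^-$ which vanishes (because $f_j^+ f_j^- = 0$ for each $j$), only off-diagonal terms $i \ne j$ survive, giving $\sum_{i\ne j} v_{ij} f_i^+ f_j^-$ with $f_i^+ f_j^- \ge 0$. For the sufficiency direction, if $v_{ij} \le 0$ for all $i \ne j$, then the integrand is pointwise nonpositive, so $a(f^+,f^-) \le 0$ and positivity follows from Proposition \ref{caracterisation of positivity by forms}. For the necessity direction, I would argue by contradiction: suppose some $v_{i_0 j_0} > 0$ on a set of positive measure for $i_0 \ne j_0$; I would then build a test function $f \in D(a)$ supported near that set whose positive part is concentrated in the $i_0$-component and negative part in the $j_0$-component (for instance, using smooth cutoffs times $\pm e_{i_0}, e_{j_0}$), so that $a(f^+,f^-) > 0$, violating the characterization and hence contradicting positivity.

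The main obstacle will be the necessity direction: constructing an explicit test function that forces $a(f^+,f^-)>0$ requires care, because $v_{i_0 j_0}$ is only locally integrable and its sign may oscillate, so I would localize using Lebesgue density points of the set $\{v_{i_0 j_0} > 0\}$ and choose cutoff functions supported in small balls there, controlling the potential term against the (small but nonzero) diffusion contribution. The domain-membership and the disjoint-support vanishing are routine; the delicate point is ensuring the constructed competitor genuinely lies in $D(a)$ and yields a strictly positive value of $a(f^+,f^-)$ despite the low regularity of $V$.
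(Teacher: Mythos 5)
Your overall strategy is the paper's: apply Proposition \ref{caracterisation of positivity by forms}, observe that the diffusion part of $a(f^+,f^-)$ vanishes because $\nabla f_j^+$ and $\nabla f_j^-$ have disjoint supports, and reduce everything to the sign of $\sum_{i\ne j}v_{ij}f_i^+f_j^-$. However, there is a genuine gap in your sufficiency direction, at the domain-stability step. You claim that $|f^+|\le|f|$ pointwise together with positive semidefiniteness of $V(x)$ gives $\langle Vf^+,f^+\rangle\le\langle Vf,f\rangle$. This is false: for a positive semidefinite matrix the quadratic form is not monotone under componentwise truncation. Take $V=\bigl(\begin{smallmatrix}1&1\\1&1\end{smallmatrix}\bigr)$ and $\xi=(1,-1)$; then $\langle V\xi,\xi\rangle=0$ but $\langle V\xi^+,\xi^+\rangle=1$. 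Worse, the conclusion itself ($f\in D(a)\Rightarrow f^+\in D(a)$) genuinely fails without the sign hypothesis: with $V=v(x)\bigl(\begin{smallmatrix}1&1\\1&1\end{smallmatrix}\bigr)$ and $f=g\,(1,-1)$, $g\ge0$, one has $\langle Vf,f\rangle\equiv0$, so $f\in D(a)$ for any $g\in H^1(\R^d)$, while $\langle Vf^+,f^+\rangle=vg^2$ need not be integrable. The correct argument, which is the paper's, expands
\begin{equation*}
\langle Vf,f\rangle=\langle Vf^+,f^+\rangle+\langle Vf^-,f^-\rangle-2\sum_{i\ne j}v_{ij}f_i^+f_j^-
\end{equation*}
(the diagonal cross terms vanish since $f_i^+f_i^-=0$) and uses the assumed condition $v_{ij}\le0$, $i\ne j$, to conclude $\langle Vf^+,f^+\rangle\le\langle Vf,f\rangle$. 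So the hypothesis you are trying to prove sufficient is exactly what is needed already at the domain step; as written, your proof of that step does not work.

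On the necessity direction, your plan (density points, small balls, controlling the potential term ``against the diffusion contribution'') is far more elaborate than needed, and the obstacle you flag is illusory: for the test functions $f=\varphi(e_i-e_j)$ with $0\le\varphi\in C_c^\infty(\R^d)$ one has $f^+=\varphi e_i$, $f^-=\varphi e_j$, the diffusion term is identically zero (not merely small), and $a(f^+,f^-)=\int_{\R^d}v_{ij}\varphi^2\,dx$. Positivity then forces $\int_{\R^d}v_{ij}\varphi^2\,dx\le0$ for every such $\varphi$, which yields $v_{ij}\le0$ a.e.\ with no localization argument. Your proposal leaves this construction unexecuted, but the idea is salvageable and, once simplified as above, coincides with the paper's.
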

\begin{proof}

Suppose that $\{T(t)\}_{t\ge 0}$ is positive. Let $i\ne j\in\{1,\dots,m\}$ and consider $f=\varphi(e_i-e_j)$ where $0\le \varphi\in C_c^\infty(\R^d)$ to be arbirarly choosen. One has $f^+=\varphi e_i$, $f^-=\varphi e_j$ and $\langle Q\nabla f^+,\nabla f^-\rangle=0$.
Applying Proposition \ref{caracterisation of positivity by forms} one obtains
\[0\ge a(f^+,f^-)=\sum_{k=1}^{m}\int_{\R^d}\langle Q\nabla u^+_k,\nabla f^-_k\rangle dx+\int_{\R^d}\langle Vf^+,f^-\rangle dx=\int_{\R^d}v_{ij}\varphi^2 dx, \]
 for every $0\le \varphi\in C_c^\infty(\R^d)$. This yields $v_{ij}\le 0$ a.e. Conversely, assume that the off-diagonal entries $v_{ij}$, $i\neq j$, are less than or equal to $0$ and let $f\in D(a)$. Let us show first that $f^+\in D(a)$. According to \cite[Lemma 7.6]{Gilb-Tru} one has, $\nabla f_k^+=\chi_{\{f_k>0\}}\nabla f_k$ and  $\nabla f_k^-=\chi_{\{f_k^-<0\}}\nabla f_k$, hence $f^+\in H^1(\R^d,\R^m)$ and $\langle Q\nabla f^+_k,\nabla f^-_k\rangle=0$. On the other hand,
 \begin{align*}
 \langle Vf,f\rangle &= \langle V(f^+ -f^-),(f^+ - f^-)\rangle\\
 &= \langle Vf^+,f^+\rangle+\langle Vf^-,f^-\rangle-2\langle Vf^+,f^-\rangle\\
 &= \langle Vf^+,f^+\rangle+\langle Vf^-,f^-\rangle-2\sum_{i,j=1}^{m}v_{ij}f^+_i f^-_j\\
 &=\langle Vf^+,f^+\rangle+\langle Vf^-,f^-\rangle-2\underset{\le 0}{\underbrace{\sum_{i\ne j}^{}v_{ij}f^+_i f^-_j }}\\
 &\ge \langle Vf^+,f^+\rangle+\langle Vf^-,f^-\rangle\\
 &\ge \langle Vf^+,f^+\rangle.
 \end{align*}
 Thus $\displaystyle\int_{\R^d}\langle Vf^+,f^+\rangle dx\le \displaystyle\int_{\R^d}\langle Vf,f\rangle dx<\infty$. Consequently $f^+\in D(a)$. Moreover,
 \begin{align*}
 a(f^+,f^-)&= \sum_{k=1}^{m}\int_{\R^d}\langle Q\nabla f^+_k,\nabla f^-_k\rangle dx+\int_{\R^d}\langle Vf^+,f^-\rangle dx\\
 &= \int_{\R^d}\sum_{i,j=1}^{m}v_{ij}f^+_i f^-_j dx\\
 &= \int_{\R^d}\sum_{i=1}^{m}v_{ii}f^+_i f^-_i dx+\int_{\R^d}\sum_{i\ne j}v_{ij}f^+_i f^-_j dx\\
 &=\int_{\R^d}\sum_{i\ne j}^{}v_{ij}f^+_i f^-_j dx\le 0.
 \end{align*} 
\end{proof}
%\begin{rmk}
%The condition on entries of $V$, appearing in Theorem \ref{carac. of positivity by entries of V}, is nothing but the minimum principle of positivity for the matrix $-V$; Note that $-A=div(q\nabla.)-V$ that generates a semigroup. This equivalence between the sign of entries of $V$ and positivity of the Schr\"odinger semigroup is already established in \cite{Our 1st paper KLMR} by using other tools.  

\subsection{Compactness}
In this subsection we give a necessary condition for compactness of the resolvent of the operator $A$ in $L^2(\R^d,\R^m)$ and we give counter example when the condition is not satisfied. Our assumption is that the smallest eigenvalue $\mu(x)$ of $V(x)$ blow up at infinity, which we rewrite as follow:\\
%\subsubsection*{Compactness condition}
 There there exists $\mu:\R^d\to\R^+$ locally integrable such that $\displaystyle\lim_{|x|\to\infty}\mu(x)=+\infty$, and
\begin{equation}\label{cond for comp.}
\langle V(x)\xi,\xi\rangle\ge \mu(x)|\xi|^2,\qquad\forall \xi\in\R^m,\forall x\in\R^d.
\end{equation}
\begin{prop}\label{prop compact}
Assume that \eqref{cond for comp.} is satisfied. Then, $T_p(t)$ is compact in $L^p(\R^d,\C^m)$, for every $t>0$. Consequently, the spectrum of $A_p$ is independent of $p\in(1,\infty)$, countable and consists of negative eigenvalues that accumulate at $-\infty$. 
\end{prop}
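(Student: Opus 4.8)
The plan is to reduce the statement, in two stages, to the compactness of a single embedding, and then to read off the spectral consequences. Since $\{T(t)\}_{t\ge 0}$ is analytic (Corollary \ref{coro gen sem in L2}), for $t>0$ it maps $L^2(\R^d,\C^m)$ into $D(A)$ and $t\mapsto AT(t)$ is bounded; writing $T(t)=R(\lambda,A)(\lambda-A)T(t)$ for $\lambda\in\rho(A)$, the compactness of $T(t)$ for every $t>0$ reduces to compactness of the resolvent $R(\lambda,A)$. Because $R(\lambda,A)$ maps $L^2$ continuously into $D(A)\subset D(a)$ with control of $\|\cdot\|_a$, the compactness of $R(\lambda,A)$ will in turn follow once the embedding $(D(a),\|\cdot\|_a)\hookrightarrow L^2(\R^d,\C^m)$ is shown to be compact. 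This embedding is the heart of the argument.

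To establish it I would take a sequence $(f_n)_n\subset D(a)$ with $\|f_n\|_a\le 1$. By definition of $\|\cdot\|_a$ together with \eqref{cond for comp.}, one has, uniformly in $n$,
\[ \|f_n\|_{H^1(\R^d,\R^m)}^2+\int_{\R^d}\mu(x)|f_n(x)|^2\,dx\le \|f_n\|_{H^1(\R^d,\R^m)}^2+\int_{\R^d}\langle V(x)f_n,f_n\rangle\,dx\le 1. \]
The $H^1$-bound and the Rellich--Kondrachov theorem provide, on each ball $B(R)$, a subsequence converging in $L^2(B(R),\C^m)$, and a diagonal extraction yields one subsequence converging in $L^2_{\emph{loc}}$. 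The tails are controlled by the blow-up of $\mu$: putting $m_R:=\inf_{|x|>R}\mu(x)$, the hypothesis $\lim_{|x|\to\infty}\mu(x)=+\infty$ forces $m_R\to+\infty$, whence
\[ \int_{|x|>R}|f_n(x)|^2\,dx\le \frac{1}{m_R}\int_{|x|>R}\mu(x)|f_n(x)|^2\,dx\le \frac{1}{m_R}, \]
uniformly in $n$, and $\frac1{m_R}\to 0$ as $R\to\infty$. Combining the uniform smallness of the tails with the local $L^2$-convergence shows that the extracted subsequence is Cauchy in $L^2(\R^d,\C^m)$. This is exactly the step I expect to be the main obstacle: upgrading the merely local compactness of Rellich to global compactness on $\R^d$, which is made possible precisely by the coercivity $\int_{\R^d}\mu|f|^2\le a(f)$.

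To pass to $L^p$ I would exploit that the resolvents $R(\lambda,A_p)$ are consistent in $p$ and bounded on every $L^p(\R^d,\C^m)$, $1<p<\infty$ (the semigroups $T_p$ being bounded and analytic by the main theorem of Section 3), while $R(\lambda,A_2)$ is compact on $L^2$. Krasnoselskii's interpolation theorem for compact operators then gives compactness of $R(\lambda,A_p)$ on $L^p$ for every $p\in(1,\infty)$: interpolating between $L^2$ (compact) and $L^q$ with $q>p$ (bounded) covers $2<p<\infty$, while interpolating between $L^r$ with $1<r<p$ (bounded) and $L^2$ (compact) covers $1<p<2$. As $T_p$ is analytic, compactness of its resolvent again yields compactness of $T_p(t)$ for every $t>0$.

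The spectral conclusions follow from the compactness of the resolvent: $\sigma(A_p)$ consists of isolated eigenvalues of finite algebraic multiplicity without finite accumulation point, hence is countable and can only accumulate at infinity. On $L^2$ the operator $A$ is self-adjoint with $\langle -Af,f\rangle=a(f)\ge 0$ by accretivity (Proposition \ref{properties of the form}), so its eigenvalues are real and nonpositive; since $L^2(\R^d,\C^m)$ is infinite-dimensional and the compact self-adjoint operator $R(\lambda,A)$ is injective, there are infinitely many of them, and being nonpositive they accumulate at $-\infty$. Finally, the $p$-independence comes from consistency: the Riesz spectral projections attached to $A_p$ are consistent finite-rank operators, so the eigenspaces consist of functions lying in $\bigcap_{1<q<\infty}L^q(\R^d,\C^m)$; hence an eigenpair for $A_p$ is an eigenpair for $A_2$ and conversely, giving $\sigma(A_p)=\sigma(A_2)$ for all $p\in(1,\infty)$.
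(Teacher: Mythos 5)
Your proof is correct, and its overall architecture coincides with the paper's: everything is reduced to the compactness of the embedding $(D(a),\|\cdot\|_a)\hookrightarrow L^2(\R^d,\C^m)$, which yields compactness of the resolvent and hence, by analyticity, of $T(t)$ for $t>0$; the $L^p$ statement and the $p$-independence of the spectrum then follow by interpolation and consistency. The difference lies in how the key embedding is established. The paper introduces the auxiliary ``diagonal'' form $a_\mu$ with scalar potential $\mu$, observes that \eqref{cond for comp.} gives a continuous embedding $D(a)\hookrightarrow D(a_\mu)$, and invokes the known scalar result (Davies, Ch.~4) that $D(a_\mu)\hookrightarrow L^2$ is compact when $\mu$ blows up at infinity; it likewise outsources the $L^p$ compactness and $p$-independence to \cite[Theorem~1.6.1, Corollary~1.6.2]{Davies}. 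You instead prove the compact embedding from scratch --- Rellich--Kondrachov on balls plus a diagonal extraction for local convergence, and the uniform tail bound $\int_{|x|>R}|f_n|^2\le m_R^{-1}\int\mu|f_n|^2\le m_R^{-1}\to 0$ coming from the coercivity $\int\mu|f|^2\le a(f)$ --- and you also spell out the Krasnoselskii interpolation of compactness and the consistency of the Riesz spectral projections. In effect you reprove, in the vector-valued setting, the scalar criterion the paper cites; this makes the argument self-contained and exposes exactly where the blow-up of $\mu$ is used, at the cost of some length, while the paper's comparison-of-forms argument is shorter and cleanly separates the vectorial reduction from the classical scalar fact. One cosmetic remark: since $\mu$ is only locally integrable, $m_R$ should be read as an essential infimum, which is what the hypothesis $\lim_{|x|\to\infty}\mu(x)=+\infty$ provides.
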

\begin{proof}
It suffices to prove $D(a)$ is compactly embedded in $L^2(\R^d,\C^m)$. Indeed, this implies that $A$ has a compact resolvent and, by analycity, $T(t)$ is compact in $L^2(\R^d,\R^m)$, for every $t>0$. The compactness in $L^p(\R^d,\C^m)$, $1<p<\infty$, follows by \cite[Theorem 1.6.1]{Davies} and the $p$-independence of the spectrum by \cite[Corollary 1.6.2]{Davies}.\\ 
Now, let us consider the 'diagonal' sesquilinear form $$a_\mu(f,g)=\int_{\R^d}\sum_{j=1}^{m}\langle Q(x)\nabla f_j(x),\nabla g_j(x)\rangle dx+\int_{\R^d}\sum_{j=1}^{m}\mu(x)f_j(x)g_j(x) dx,$$ with domain $$D(a_\mu)=\{f\in H^1(\R^d,\C^m) :\int_{\R^d} \sum_{j=1}^{m}\mu(x)|f_j(x)|^2 dx < +\infty\}.$$
Since $\displaystyle\lim_{|x|\to\infty}\mu(x)=+\infty$, one has $D(a_\mu)$ is compactly embedded in $L^2(\R^d,\C^m)$, see \cite[Chapter 4]{Davies}. On the other hand, \eqref{cond for comp.} implies $D(a)\subseteq D(a_\mu)$ and $a_\mu(f)\le a(f)$ for all $f\in D(a)$. Thus, $D(a)$ is continuously embedded in $D(a_\mu)$. It follows that the embedding $D(a)\hookrightarrow L^2(\R^d,\C^m)$ is compact. Now, the discretness of the spectrum follows by the spectral mapping theorem, since $A$ has a compact resolvent.
\end{proof}
\begin{examp}
Here we give a counter-example where of Proposition \ref{prop compact} cannot apply and the compactness result fails even if all entries of the matrix potential blow up at infinity. We even have a nonponctual. Let us consider the following two-size matrix-valued function
$$x\mapsto V(x)\coloneqq v(x)\begin{pmatrix}
1 & -1\\
-1& 1\\
\end{pmatrix}=v(x)J,$$
where $v\in L^1_{loc}(\R^d)$ is a nonnegative function such that $\displaystyle\lim_{|x|\to\infty}v(x)=+\infty$.
$V$ is symmetric and satisfies \eqref{accretivity of V}. $V$ can be written as follow
\[ V(x)=P^{-1}\begin{pmatrix}
2 v(x) & 0\\
0& 0\\
\end{pmatrix}P,\] where $P\coloneqq\begin{pmatrix}
1 & 1\\
-1& 1\\
\end{pmatrix}$. The Schr\"odinger operator $A$ with $Q=I_2$ becomes
\[ A=\Delta-V=P^{-1}\begin{pmatrix}
\Delta-2 v(x) & 0\\
0& \Delta\\
\end{pmatrix}P. \]
Since the Laplacian operator $\Delta$ has no compact resolvent on $L^2(\R^d)$, thus the matrix operator $$\begin{pmatrix}
\Delta-2 v(x) & 0\\
0& \Delta\\
\end{pmatrix}$$ has no compact resolvent. Then so is $A$.\\
Furthermore, the spectrum of $A$ is continuous. In fact, $\sigma(A)=\sigma(\Delta)\cup\sigma(\Delta-2v)=]-\infty,0]$. However, the ponctual spectrum $\sigma_p(A)=\sigma(\Delta-2v)$ is countable.

Such potentials can be constracted even for higher dimensions : $m\ge  3$. One can consider $V(x)=v(x)J_m$ where $v$ is any nonnegative locally integrable function that blow up at infinity and $J_m$ a symmertic semi-definite positive $(m\times m)$-matrix having $0$ as eigenvalue. For instance, one can choose \[J_m=\begin{pmatrix}
  m-1 & -1& \cdots & -1\\
  -1 & m-1 & \ddots &\vdots\\
  \vdots& \ddots& \ddots&-1\\
  -1&\cdots& -1& m-1\\
  \end{pmatrix}.\]
\end{examp}
 \begin{rmk}
 Under the condition \eqref{cond for comp.} which guaranties compactness of the resolvent of $A$, one can get more information about the spectrum $\sigma(A)$ of $A$ by application of the min-max principle. Indeed, let $\mu,\nu:\R^d\to\R^+$ be locally integrables such that $\mu$ blows up at infinity and
 \begin{equation}
 \mu(x)|\xi|^2\le \langle V(x)\xi,\xi\rangle\le \nu(x)|\xi|^2,
 \end{equation}
 for every $x\in\R^d$ and $\xi\in\R^m$. Denotes by $\{\lambda_1<\lambda_2<\dots\}$ the increasing sequence of eigenvalues of $-A$. By $\{\lambda^\mu_1<\lambda^\mu_2<\dots\}$ we denote the eigenvalues of the scalar operator $-(div(Q\nabla\cdot)-\mu)$ in $L^2(\R^d)$. We use the same notation for $\nu$. According to the min-max principle, one has $\lambda_n^\nu\le \lambda_n\le\lambda_n^\mu$, for all $n\in\N$.
 
  We recall that the min-max principle is a way to express eigenvalues of an operator via its associated form, see \cite[Chapter IV]{CH52}. The min-max formula applied to $A$ yields
 \[\lambda_n=\underset{F_1,\dots,F_{n-1}\in H}{\max}\inf\{a(f): f\in\{F_1,\dots,F_{n-1}\}^\bot\cap D(a)\;\emph{with}\;\|f\|=1 \}.\]
 \end{rmk}

 \section*{Acknoledgement}
 The author would like to thank Markus Kunze for suggesting the reference \cite{Ouhabaz 99} which contains the vectorial Denny--Beurling criterion of $L^\infty$--contractivity. He is also grateful to the referee for valuable comments and suggestions.

\end{document}